\numberwithin{equation}{section}
\newtheorem{theorem}{Theorem}[section]
\newtheorem{remark}[theorem]{Remark}
\newtheorem{proposition}[theorem]{Proposition}
\newcommand{\field}[1]{\mathbb{#1}}
\newcommand{\C}{{\field{C}}}
\newcommand{\id}{{\iota}}
\newcommand{\ot}{{\otimes}}
\newcommand{\om}{{\omega}}
\newcommand{\tp}{{\widehat{\otimes}}}
\newcommand{\N}{{\mathfrak{N}}}
\newcommand{\M}{{\mathfrak{M}}}
\newcommand{\B}{{\mathcal{B}}}
\newcommand{\fee}{{\varphi}}
\newcommand{\LL}{{\mathcal{L}^{\infty}(\G)}}
\newcommand{\LO}{{\mathcal{L}^{1}(\G)}}
\newcommand{\LT}{{\mathcal{L}^{2}(\G)}}
\newcommand{\MG}{{\mathcal{M}(\G)}}
\newcommand{\PG}{{\mathcal{P}(\G)}}
\newcommand{\CZ}{{\mathcal{C}_0(\G)}}
\newcommand{\CU}{{C_u(\G)}}
\newcommand{\hp}{{\mathcal{H}_\mu^p(\G)}}
\newcommand{\hhp}{\tilde{\mathcal{H}}_\mu^p(\G)}
\newcommand{\hpp}{\tilde{\mathcal{H}}_\mu^p(\G)}
\newcommand{\hq}{{\tilde {\mathcal{H}}_\mu^q(\G)}}
\newcommand{\ho}{{\mathcal{H}_\mu^1(\G)}}
\newcommand{\jp}{{\mathcal{J}_\mu^p(\G)}}
\newcommand{\jq}{{\tilde{\mathcal J}_\mu^q(\G)}}
\newcommand{\Ep}{{E_\mu^p}}
\newcommand{\Eq}{{{\tilde E}_\mu^q}}
\newcommand{\EEp}{{{\tilde E}_\mu^p}}
\newcommand{\Lp}{{\mathcal{L}^p(\G)}}
\newcommand{\LLp}{\tilde{{\mathcal{L}}}^p(\G)}
\newcommand{\Lq}{{\tilde{\mathcal{L}}^q(\G)}}
\newcommand{\nf}{{\mathfrak{N}_\fee}}
\newcommand{\G}{\mathbb G}
\def\proclaim #1. #2\par{\medbreak
\noindent{\bf#1.\enspace}{\sl#2}\par\medbreak}
\title[Harmonic NC $L^p$-Opertors on LCQ Groups]
{On Harmonic Noncommutative $L^p$-Operators on Locally Compact Quantum  Groups}
\author{Mehrdad Kalantar}
\address{ School of Mathematics and Statistics,
        Carleton University, Ottawa, Ontario, Canada K1S 5B6}
\email{mkalanta@math.carleton.ca}
\begin{document}
%\maketitle
%============================
%  ABSTRACT
%============================
\begin{abstract}
For a locally compact quantum group $\G$ with tracial Haar weight $\fee$, and a quantum measure $\mu$ on $\G$, we study the space $\hp$ of 
$\mu$-harmonic operators in the non-commutative $L^p$-space $\Lp$ associated to the Haar weight $\fee$.
The main result states that if $\mu$ is non-degenerate, then $\hp$ is trivial for all $1\leq p<\infty$. 
\end{abstract}

%\begin{document}

\maketitle

%%%%%%%%%%%%%%%%%%%%%%%%%%%%%%%%%%%%%%%%%%%%%%%%%%%%%
%%%%%%%%%%%%%%%%%%%%%%%%%%%%%%%%%%%%%%%%%%%%%%%%%%%%%

\section{Introduction and Preliminaries}
%Poisson boundaries and harmonic functions 
% have  played  a very important role in the study of random 
%walks on discrete groups, and more generally in harmonic analysis and 
%ergodic theory on locally compact groups.
%Let us recall that if $G$ is a locally compact group and  
%$\mu$ is  a probability measure on $G$, 
%then the convolution action of $\mu$ gives a Markov operator on  $L^{\infty}(G)$
%defined by 
%\begin{equation}
%\label {F.1.1}
%\Phi_{\mu}(h) (s)=  \mu \star h(s)= \int_{G} h(st){d}\mu(t) \ \ \ \ (s\in G).
%\end{equation}
%In this case there exists
%a probability measure space $(\Pi, \nu)$, the Poisson boundary of $(G, \mu)$,
%such that $L_\infty(\Pi, \nu)$ can be identified with the space 
%$\mu$-harmonic functions, i.e., functions $h$ on $G$ satisfying $\Phi_{\mu}(h) = h$.

%The noncommutative version of this concept have been considered
%in different ways (see for example \cite{}, \cite{}...).
Noncommutative Poisson boundaries of (discrete) quantum groups $\G$
%The study of these spaces
%over quantum groups
was first introduced and studied by Izumi in \cite{I}.
Motivated by the classical setting, in fact, he defined the 
Poisson boundary of $\G$ associated to a `quantum measure' $\mu$,
as the space of $\mu$-harmonic `functions', i.e., the fixed point space of the Markov operator
associated to $\mu$.
%Poisson boundaries over discrete quantum groups have been further studied
%in  \cite{INT}  \cite{VV1}, \cite{VV2}, \cite{VV3}.
For discrete quantum groups, this has been further studied by several authors (cf. \cite{INT}, \cite{VV2}, \cite{VV3}).
Poisson boundaries in the locally compact quantum group setting has been studied
by Neufang, Ruan and the author in \cite{KNR}.
Quantum versions of several important classical results regarding harmonic functions were proved there.
In particular, triviality of special classes of harmonic functions, such as $\mathcal{C}_0$-functions, was proved.

Another important fact regarding classical harmonic functions on locally compact groups, is that
for $1\leq p <\infty$, any $L^p$-harmonic function associated to an adapted probability measure is trivial.
The main result of this paper is a quantum version of this result.
But, in order to talk about $\mu$-harmonic elements in the non-commutative $L^p$-spaces,
we first need to define the convolution action by $\mu$ on such spaces.

In his PhD thesis \cite{Cooney}, Cooney has studied the noncommutative $L^p$-spaces 
associated to the Haar weight $\fee$ of a locally compact quantum group $\G$.
He mainly considered the Haagerup's version, and could prove
that in the Kac algebra setting, the convolution action of an `absolutely continuous quantum measure'
can be extended to the Haagerup noncommutative $L^p$-spaces.
So, we cannot consider harmonic operators in the general setting of all locally compact quantum groups.
Moreover, in the case of non-tracial $\fee$, there are different ways to define the non-commutative
$L^p$-spaces. Although, all these spaces are isometrically isomorphic as Banach spaces, but
the identifications are not necessarily compatible with the quantum group structure,
and so it is not clear whether the space of $\mu$-harmonic $L^p$-operators is the same, as a Banach space,
for all different definitions of non-commutative $L^p$-spaces.

So, in this paper, instead of restricting ourselves to the Kac algebra setting,
we consider locally compact quantum groups $\G$ whose Haar weight $\fee$ is a trace.
In this case, the convolution action is extended to the noncommutative $L^p$-spaces, and
the main result of the paper states that
in the case of a non-degenerate quantum measure $\mu$,
for $1\leq p <\infty$, any $\mu$-harmonic element which lies in the noncommutative $L^p$-space of $\fee$,
is trivial.

%%%%%%%%%%%%%%%%%%%%%%%%%%%%%%%%%%%%%%%%%%%%%
%%%%%%%%%%%%%%%%%%%%%%%%%%%%%%%%%%%%%%%%%%%%%

First, let us introduce our terminology and recall some results on locally compact quantum groups which we will be using in this paper.
For more details, we refer the reader to \cite{KV1}.

A \emph{locally compact quantum group} $\G$ is a
quadruple $(M, \Gamma, \varphi, \psi)$, where $M$ is a
von Neumann algebra with a co-associative co-multiplication
$\Gamma: M\to M \bar\otimes M$, and $\varphi$
and  $\psi$ are  (normal faithful semi-finite) left and right
Haar weights on $M$, respectively. 
We write $\M_\fee^+ = \{x\in M^+ : \fee(x)<\infty\}$ and
$\N_\fee = \{x\in M^+ : \fee(x^*x)<\infty\}$, and we denote
by $\Lambda_\fee$ the inclusion of $\N_\fee$ into the GNS Hilbert
space $H_\fee$ of $\fee$.
For each locally compact
quantum group $\G$, there exist a  \emph{left fundamental unitary
operator}  $W$ on $H_\fee\otimes H_\fee$
which satisfies  the  pentagonal relation
and such that the co-multiplication $\Gamma$ on $M$ can be expressed as
\begin{equation*}
\Gamma(x) = W^{*}(1\otimes x)W
\quad(x \in M).
\end{equation*}
There exists an anti-automorphism $R$ on $M$, called the {\it unitary antipode}, such that $R^2 = \id$, and
\[
\Gamma\circ R = \chi(R\ot R)\circ\Gamma,
\]
where $\chi(x\ot y) = (y\ot x)$ is the flip map. It can be easily seen that if $\fee$ is a left Haar weight, 
then $\fee R$ defines a right Haar weight on $M$.

Let $M_*$ be the predual of $M$. Then the pre-adjoint of
$\Gamma$ induces on $M_*$ an associative completely
contractive multiplication
\begin{equation*}
 \star  :  M_*\hat\otimes M_*\ni f_{1} \otimes f_{2}
 \,\longmapsto\, f_{1} \star f_{2} = (f_{1} \otimes
f_{2})\circ \Gamma \in M_*.
\end{equation*}

The \emph{left regular representation} $\lambda : M_* \to
\B(H_\fee)$ is defined by
 \[
\lambda : M_*\ni f   \,\longmapsto\, \lambda(f) = (f\otimes \iota)(W)
\in \B(H_\fee),
 \]
which is an injective and completely contractive algebra homomorphism
from $M_*$ into $\B(H_\fee)$.
Then  $\hat M={\{\lambda(f): f\in M_*\}}''$
is the von Neumann algebra associated with the dual quantum group
$\hat \G$.  It follows that $W \in M \bar \otimes \hat M$.
We also define the completely contractive injection
\[
\hat\lambda:  {\hat M}_*\ni\hat f \,\longmapsto\, \hat\lambda(\hat f) =
(\iota \otimes \hat f)(W)\in M.
\]
The \emph{reduced quantum group $C^*$-algebra}  
$$\mathcal{C}_{0}(\G) = \overline{\hat\lambda(L_{1}(\hat \G) )}^{\|\cdot\|}$$
is a weak$^*$ dense $C^*$-subalgebra of $M$.
Let $\MG$ denote the operator dual $\CZ^{*}$.
There exists  a completely contractive multiplication on $\MG$ given by
the convolution
\[
\star : \MG\tp \MG\ni \mu\ot \nu \,\longmapsto\, \mu \star \nu 
= \mu (\id\otimes \nu)\Gamma = \nu (\mu \otimes \id)\Gamma
\in \MG
\]
such that  $\MG$ contains $M_*$ as a norm closed two-sided ideal.
%For a locally compact quantum group $\G$, we
%denote by $S$ its {\it antipode}, which is the unique $\sigma$-strong$^*$
%closed linear map on $M$ 
%satisfying $(\om \ot\id)(W) \in \mathcal{D}(S)$ for all 
%$\om\in\B(\LT)_*$, and $S(\om \ot\id)(W) = (\om \ot\id)(W^*)$,
%and such that the elements $(\om \ot\id)(W)$ form a $\sigma$-strong$^*$
%core for $S$. The antipode $S$
%has a polar decomposition $S = R\tau_{-\frac i2}$,
%where $R$ is an anti-automorphism of $M$ and $(\tau_t)$
%is a strongly continuous one-parameter group of automorphisms of $M$.
%Moreover, the following hold for all $t\in\field{R}$.
%\[
%\Gamma\circ{\sigma^\fee_{t}} = (\tau_{t} \otimes \sigma^\fee_{t})\circ\Gamma.
%\]
%For more details, we refer the reader to \cite{K-V}.
%We denote by $\CU$ the \emph{universal quantum group $C^*$-algebra} introduced and studied in \cite{Kus}.
%The operator dual by $M_u(\G)=C_{u}(\G)^{*}$, which can be regarded
%as the space of all {\it quantum measures} on $\G$, is a 
%unital completely contractive Banach 
%algebra with multiplication given by
%\[
%\om \star_{u} \mu = \om (\id\otimes \mu)\Gamma_{u} = \mu(\om\otimes \id)\Gamma_{u}
%\]
%Moreover, we can identify $M_*$ and $M(\G)$ with 
%norm closed two-sided ideals in  $M_u(\G)$.
 Therefore, for each $\mu\in\MG$, we obtain a pair of completely bounded maps
\begin{equation*}
f\, \longmapsto \, \mu \star f\ \  ~ \mbox{and } ~ \ \ f \,\longmapsto\, f \star \mu 
\end{equation*}
on $M_*$ through the left and right convolution products of $\MG$.
The adjoint maps give the convolution actions $x\mapsto \mu\star x$ and $x\mapsto x\star\mu$
that are normal completely bounded maps on $M$.

%%%%%%%%%%%%%%%%%%%%%%%%%%%%
%%%%%%%%%%%%%%%%%%%%%%%%%%%%

We denote by $\PG$ the set of all states on $\CZ$ (i.e., `the quantum probability measures'). 
For any such element the convolution action is a \emph{Markov operator}, i.e.,
a unital normal completely positive map, on $M$.

Now assume that the left Haar weight $\fee$ on $\G$ is a trace, and let $\psi = \fee R$ be the right Haar weight.
%For $1 < p < \infty$ we denote by $\Mp$ the linear span of the set $\{x\in M^+ : \fee(|x|^p)<\infty\}$
%(we will still use $\nf$ for the case $p=2$), and
We denote by $\Lp$ and $\tilde{\mathcal{L}}^p(\G)$ the noncommutative $L^p$-spaces associated to $\fee$ and $\psi$, respectively;
these spaces are obtained by taking the closure
of $\M_\fee$ and $\M_\psi$ under the norms $\|x\| = \fee(|x|^p)^{\frac1p}$ and $\|x\| = \psi(|x|^p)^{\frac1p}$, respectively (see \cite{Tak2} for details).
We denote by $\LL$ the von Neumann algebra $M$.
Similar to the classical case, one can also construct the non-commutative ${L}^p$-spaces, using the complex interpolation method 
(cf. \cite{Iz}, \cite{Kosaki}, \cite{Terp}).
The map
\begin{equation}\label{je1}
\M_\fee\ni x\,\longmapsto\, \fee\cdot x \in M_*
\end{equation}
extends to an isometric isomorphism between $\LO$ and $M_*$,
where $\langle \fee\cdot x \,,\, y \rangle = \fee(xy)$.

%We have $\M_\fee\subseteq\mp$ for all $1<p$.

\section{$\mu$-harmonic operators}
We assume that $\mu\in\PG$ throughout this section.
By invariance of the left Haar weight $\fee$,
we can easily see that $\Lp\cap\LL$ is invariant under the left convolution action by $\mu$.  
Since $\fee = \psi R$ is a trace, by \cite[Proposition 5.20]{KV1}, we have
\[
R\big((\id\ot\fee)\Gamma(a)(1\ot b)\big) = (\id\ot\fee)\big((1\ot a)\Gamma(b)\big)
\]
for all $a,b\in\nf$. 
%And, similarly
%\[
%R\big((\psi\ot\id)(a\ot 1)\Gamma(b)\big) = (\psi\ot\id)\big(\Gamma(a)(b\ot 1)\big).
%\]
%for all $a,b\in\N_\psi$.
Therefore we obtain
\begin{eqnarray}
\langle\, \mu\star(\fee \cdot a) \,,\, b \,\rangle &=& \langle\, \mu\,,\, (\id\ot\fee) \big((1\ot a) \Gamma(b)\big) \,\rangle =
\langle\,\mu R \,,\, (\id\ot\fee)\big(\Gamma(a)(1\ot b)\big)\,\rangle \\ 
&=& \left\langle\,(\mu R) \star (b\cdot\fee)\, ,\, a\,\right\rangle 
= \left\langle\, (b\cdot\fee)\, ,\, (\mu R) \star a\,\right\rangle 
=  \left\langle\, \fee\cdot((\mu R) \star a)\, ,\, b\,\right\rangle.
\end{eqnarray}
%for all $a,b\in\nf$. The identity \ref{??} implies that the scaling group is trivial
%(note that in non-compact case this does not necessary imply that $\G$ is of Kac type). 
Since the map (\ref{je1}) is an isometry, this shows that the convolution action
\[
\M_\fee \ni x \,\longmapsto \fee\cdot x \,\longmapsto (\mu R)\star(\fee\cdot x) = \fee\cdot(\mu\star x)\,\longmapsto \mu \star x \in\M_\fee
\]
extends to an operator on $\LO$ with the same norm as the convolution operator by $\mu$ on $\LL$.
Now, interpolating between $\LO$ and $\LL$, we can extend the convolution action
\[
\Lp\cap\LL\ni x\,\longmapsto\, \mu\star x\in\Lp\cap\LL
\]
to $\Lp$.
An operator $x\in\Lp$ is called $\mu$-harmonic if $\mu\star x = x$, and
$\hp = \{x\in \Lp: \mu\star x = x\}$ is the space of $\mu$-harmonic operators.
It is easy to see that $\hp$ is a weak*
closed subspace of $\Lp$ for all $1<p\leq\infty$.

Similar to the case $p=\infty$, we have a projection $\Ep : \Lp\to\hp$ constructed as follows.
Let $\mathcal{U}$ be a free ultra-filter on $\mathbb{N}$, and define
$\Ep: \Lp \to \Lp$ by 
the weak$^*$ limit
\begin{equation*}%\label{11}
\Ep (x) = \lim_{\mathcal{U}} \frac{1}{n}\sum_{k=1}^{n} \mu^k\star x. 
\end{equation*}
Then it is easy to see that $\Ep\circ\Ep = \Ep$, and that $\hp = \Ep(\Lp)$.
Moreover, by considering the convolution action on $\Lp\cap\LL$, and passing to limits,
we can see that $\Ep$ is also positive.

Similarly, we can extend the right convolution action
\[
\LLp\cap\LL\ni x\,\longmapsto\, x\star\mu\in\LLp\cap\LL
\]
to $\LLp$. Then $\hhp = \{x\in \LLp: x\star\mu = x\}$
is a weak* closed subspace of $\LLp$,
and there is a positive projection $\EEp$ on $\LLp$ such that $\hpp = \EEp(\LLp)$.

\begin{proposition}
The unitary antipode $R$ extends to an isometric isomorphism
\[
R \,:\, \Lp\, \to\, \LLp
\]
such that $R(\hp) = \tilde{\mathcal{H}}_{\mu R}^p$ for all $1\leq p \leq \infty$.
\end{proposition}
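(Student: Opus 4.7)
The plan has two independent pieces: (i) exhibit $R$ as an isometric isomorphism $\Lp\to\LLp$, and (ii) verify the intertwining identity $R(\mu\star x) = R(x)\star(\mu R)$ on $\Lp$. Once these are established, the statement $R(\hp)=\tilde{\mathcal{H}}_{\mu R}^p$ follows immediately, because any $x\in\hp$ satisfies $R(x)\star(\mu R) = R(\mu\star x)=R(x)$, and applying $R$ again together with $R^2=\id$ and $(\mu R)R = \mu$ yields the reverse inclusion.

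\textbf{Step 1: Isometry.} Since $R$ is an anti-$\ast$-automorphism and $\psi = \fee R$, the identity $\psi(R(x)) = \fee(x)$ shows that $R$ carries $\M_\fee$ bijectively onto $\M_\psi$. For the norm, I would use the polar-type computation
\[
|R(x)|^2 = R(x)^* R(x) = R(x^*)R(x) = R(xx^*) = R(|x^*|^2),
\]
and then appeal to the fact that the $\ast$-anti-automorphism $R$ commutes with continuous functional calculus on positive elements, giving $|R(x)|^p = R(|x^*|^p)$. Applying $\psi = \fee R$ gives $\psi(|R(x)|^p) = \fee(|x^*|^p)$, and the trace property of $\fee$ (via the polar decomposition $x=u|x|$) yields $\fee(|x^*|^p) = \fee(|x|^p)$. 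Thus $R$ is isometric from $\M_\fee$ into $\M_\psi$, and extends to an isometric isomorphism $\Lp\to\LLp$ for $1\le p<\infty$; the case $p=\infty$ is automatic since $R$ is already a $\ast$-anti-automorphism of $M$.

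\textbf{Step 2: Intertwining.} For $a\in M$, I would apply the fundamental relation $\Gamma\circ R = \chi(R\ot R)\circ\Gamma$ to compute, symbolically writing $\Gamma(a) = \sum x_i\ot y_i$:
\[
R(\mu\star a) = R\bigl((\mu\ot\id)\Gamma(a)\bigr) = \sum \mu(x_i)R(y_i),
\]
while
\[
R(a)\star(\mu R) = (\id\ot(\mu R))\,\chi(R\ot R)\Gamma(a) = \sum R(y_i)\,(\mu R)(R(x_i)) = \sum \mu(x_i)R(y_i),
\]
so the two expressions agree on all of $\LL$. Restricting to $\Lp\cap\LL$ and using that both sides are bounded by the same convolution operator norm (in both the $p=\infty$ and, via the identification \eqref{je1}, $p=1$ extremes), interpolation extends the identity $R(\mu\star x) = R(x)\star(\mu R)$ to all $x\in\Lp$.

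\textbf{Main obstacle and conclusion.} The formal algebraic identity in Step 2 is immediate, so the real work lies in Step 1: verifying that $R$ maps $\Lp$ isometrically onto $\LLp$, which requires the interaction $|R(x)|^p=R(|x^*|^p)$ between the anti-multiplicative $R$ and the modulus, combined with the tracial property $\fee(|x^*|^p) = \fee(|x|^p)$. Once both steps are in place, the equality of weak$^*$-closed subspaces $R(\hp)$ and $\tilde{\mathcal{H}}_{\mu R}^p$ follows by combining them as noted at the start.
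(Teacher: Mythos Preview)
Your proposal is correct and follows essentially the same approach as the paper: establish isometry via $\psi(|R(a)|^p)=\fee(|a|^p)$ and then verify the intertwining $R(\mu\star a)=R(a)\star(\mu R)$ from $\Gamma\circ R=\chi(R\ot R)\Gamma$. If anything, your Step~1 is slightly more careful than the paper's, which writes $\psi(|R(a)|^p)=\psi(R(|a|^p))$ directly; you correctly obtain $|R(x)|^p=R(|x^*|^p)$ first and then invoke traciality of $\fee$ to pass from $|x^*|$ to $|x|$.
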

\begin{proof}
Since $R$ is an anti-automorphism, we have
\[
\psi(|R(a)|^p) = \psi(R(|a|^p)) = \fee(|a|^p) 
\]
for all $a\in\M_\fee$. Therefore $R$ extends to an isometry from $\Lp$ onto $\LLp$.
Moreover, we have
\begin{eqnarray*}
R(\mu\star a) &=& R\big((\mu\ot\id)\Gamma(a)\big) = R\big((\mu\ot\id)\Gamma(R^2(a))\big) \\
&=& R\big((\id\ot\mu)(R\ot R)\Gamma(R(a))\big) \\ &=&
(\id\ot\mu R)\Gamma(R(a)) = R(a)\star\mu R
\end{eqnarray*}
which implies that
$R(\hp) = \tilde{\mathcal{H}}_{\mu R}^p\,$.
\end{proof}
Therefor, for $1<p,q<\infty$ with $\frac1p + \frac1q =1$, we can identify
each of $\Lp$ and $\Lq$ with the dual space of the other, via
\[
\langle a \,,\, b\rangle = \fee(a R(b)) = \psi(R(a)b) \ \ \ \ a\in\Lp\,, \ \ b\in\Lq\,.
\]

\begin{theorem}\label{j2}
Let $1 < p\, , q < \infty$ such that $\frac1p + \frac 1q = 1$, then we have linear isometric isomorphisms
\[
\hp^* \cong \hq\ \ \ \ \  \ \text{and} \ \ \ \ \  \ \hp \cong \hq^*
\]
\end{theorem}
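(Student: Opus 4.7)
The plan is to identify $\Ep$ and $\Eq$ as Banach-space adjoints of one another with respect to the pairing $\langle a,b\rangle = \fee(aR(b))$ between $\Lp$ and $\Lq$, and then invoke the standard duality theory for contractive projections.

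First I would establish the key identity
\[
\langle \mu\star a\,,\,b\rangle \,=\, \langle a\,,\,b\star\mu\rangle
\]
on a suitable dense subspace (something like $\M_\fee\cap\M_\si$ intersected with an algebraic core where the slice maps make sense) and then extend by continuity. The derivation mirrors the calculation already carried out in the excerpt to extend left convolution to $\LO$: expand
\[
\fee\big((\mu\star a)R(b)\big) \,=\, \mu\Big((\id\ot\fee)\big(\ga(a)(1\ot R(b))\big)\Big),
\]
apply the symmetry identity $R\big((\id\ot\fee)(\ga(a)(1\ot R(b)))\big) = (\id\ot\fee)\big((1\ot a)\ga(R(b))\big)$ to get $\langle\mu\star a, b\rangle = \fee\big(a\cdot((\mu R)\star R(b))\big)$, and then use $R\big((\mu R)\star R(b)\big) = b\star\mu$, which follows directly from the identity $R(\mu\star x) = R(x)\star\mu R$ established in the preceding proposition (apply it with $\mu R$ in place of $\mu$ and $R(b)$ in place of $x$). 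Because left convolution by $\mu$ on $\Lp$ and right convolution by $\mu$ on $\Lq$ are both contractions (being interpolants of Markov operators), the identity extends to all of $\Lp\times\Lq$.

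Passing to Ces\`aro averages, for each $a\in\Lp$ and $b\in\Lq$
\[
\langle \Ep(a)\,,\,b\rangle \,=\, \lim_{\mathcal{U}}\tfrac1n\sum_{k=1}^n\langle\mu^k\star a\,,\,b\rangle \,=\, \lim_{\mathcal{U}}\tfrac1n\sum_{k=1}^n\langle a\,,\,b\star\mu^k\rangle \,=\, \langle a\,,\,\Eq(b)\rangle,
\]
so $(\Ep)^* = \Eq$ as operators on $\Lq=(\Lp)^*$. Now apply the Banach-space fact that for a contractive projection $P$ on a Banach space $X$, the range $\operatorname{range}(P)$ is isometrically isomorphic to $X/\ker(P)$, and its dual is the annihilator $(\ker P)^\perp = \ker(I - P^*) = \operatorname{range}(P^*)$. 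Applied to $P=\Ep$, this yields $\hp^* \cong \operatorname{range}(\Eq) = \hq$ isometrically. The second isomorphism $\hp\cong\hq^*$ follows either by reflexivity of $\hp$ (which is a complemented subspace of the reflexive space $\Lp$, hence reflexive) or equivalently by running the same argument with the roles of $p$ and $q$ interchanged.

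The main obstacle I anticipate is the rigorous verification of the identity in the first step on a dense subspace, since the $R$-symmetry identity from \cite{KV1} is stated for elements of $\nf$, while the pairing and convolutions live on different $L^p$-spaces; one must choose a common dense subspace on which all the necessary manipulations are simultaneously justified. Once the identity is in hand on such a core, extending by density plus the contractivity of convolutions is routine, and everything afterwards is purely formal Banach-space duality.
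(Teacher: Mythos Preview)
Your proposal is correct and follows essentially the same route as the paper. Both arguments hinge on the duality identity $\langle \mu\star a,\,b\rangle = \langle a,\,b\star\mu\rangle$ (the paper derives it via the same $R$-symmetry formula you cite) and the consequent adjointness $(\Ep)^*=\Eq$; the only difference is packaging---the paper writes out the quotient $\Lq/\jq$ with $\jq=\overline{\{y-y\star\mu\}}$, checks $\hp=\jq^\perp$, and verifies by hand that $y+\jq\mapsto\Eq(y)$ is an isometry, whereas you invoke the general fact about contractive projections that encodes exactly this.
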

\begin{proof}
Denote
%$$\jp := \{x - \mu\star x\,:\, x\in\Lp\}^-$$
%and
%$$\tilde{\mathcal J}_\mu^q(\G) := \{y - y\star\mu\,:\, y\in\Lq\}^-)$$
$$\jp := \{x - \mu\star x\,:\, x\in\Lp\}^- \ \ \ \text{and} \ \ \
\tilde{\mathcal J}_\mu^q(\G) := \{y - y\star\mu\,:\, y\in\Lq\}^-$$
%
% (resp. $\tilde{\mathcal J}_\mu^q(\G)$) the subspace $\{x - \mu\star x\,:\, x\in\Lp\}^-$ (resp. $\{y - y\star\mu\,:\, y\in\Lq\}^-$).
Since
\begin{eqnarray*}
\langle x \,,\, y\star\mu\rangle &=& \psi\big(R(x)(y\star\mu)\big) 
= \psi\big(R(x)(\id\ot\mu)\Gamma(y)\big) 
= \mu\big((\psi\ot\id)(R(x)\ot 1)\Gamma(y)\big) \\
&=& \mu R\big((\psi\ot\id)\Gamma(R(x))(y\ot 1)\big) = 
\psi\big((\id\ot\mu R)\Gamma(R(x))\,y\big) \\
&=& \psi\big(R\big((\mu\ot\id)\Gamma(x)\big)\,y\big) 
= \psi(R(\mu\star x)\,y) = \langle \mu\star x \,,\, y\rangle
\end{eqnarray*}
for all $x\in\M_\fee$ and $y\in\M_\psi$,
it follows that $\hp = \jq^\bot$, and therefore
\[
\hp^* = \frac{\Lq}{\hp^\bot} = \frac{\Lq}{\jq}\,.
\]
In the following we show that the correspondence
\[
\frac{\Lq}{\jq}\ni y + {\jq} \,\longmapsto\, \Eq(y)\in \hq. 
\]
defines a linear isometric isomorphism. First we observe that
\[
\tilde{E}_\mu^q(y\star\mu - y) = \lim_{\mathcal U} \left( (y\star\mu-y)\star\sum_1^n\frac{\mu^k}{n}\right) = 0
\]
for all $y\in\Lq$, which implies that the above map is well-defined. It is obviously onto.
To check the injectivity, first note that
\[
y - y\star\mu^k = (y - y\star\mu) + (y\star\mu - y\star\mu^2) + (y\star\mu^{k-1} - y\star\mu^k)\in\jq,
\]
%and therefore
%\[
%y - y\star\frac1n(\mu + \mu^2 + \cdots + \mu^n) = \frac1n\big((y-y\star\mu) + (y-y\star\mu^2) + \cdots + (y-y\star\mu^n)\big) \in\jq
%\]
for all $k\in\mathbb N$. Now suppose that $\Eq(y) = 0$. Then, by above, and weak* closeness of $\jq$, we have
\[
y = y - \Eq(y) = y - \left(\lim_{\mathcal{U}} \frac{1}{n}\sum_{k=1}^{n} y\star\mu^k\right) = \lim_{\mathcal{U}} \frac{1}{n}\sum_{k=1}^{n} \left(y - y\star\mu^k\right)\in\jq.
\]
%assume that $\Eq(y) = 0$. Then for every $x\in\hp$ we have
%\[
%0 = \langle \Eq(y) , x\rangle = \langle y , \Ep(x)\rangle = \langle y , x\rangle.
%\]
%Hence, $y\in\hp^\bot = \jq$ 
and therefore the injectivity of the map follows.
Moreover, since $\Eq$ is an idempotent, it follows that
\[
y + \jq = \Eq(y) + \jq.
\]
Therefore
\[
\| y + \jq \| \leq \| \Eq(y)\|.
\]
On the other hand, we have
\begin{eqnarray*}
\| \Eq(y) \| &=& \sup \big\{\big|\,\langle\Eq(y)\,,\,x\rangle\,\big|\,:\, x\in\Lp\,,\ \|x\|\leq1\big\} \\
&=& \sup \big\{\left|\,\langle y \,,\, \Ep(x)\rangle \,\right|\,:\, x\in\Lp\,,\ \|x\|\leq1\big\} \\ &\leq& \|y + \jq\|\,. 
\end{eqnarray*}
This shows that the map is isometric, and so yields the first identification. The second identification is proved along similar lines.
\end{proof}

\begin{proposition}\label{j1}
For $1<p\leq\infty$ the space $\hp$ is generated by its positive elements.
\end{proposition}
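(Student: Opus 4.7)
The plan is to exploit the positivity of the projection $\Ep \colon \Lp \to \hp$ together with the standard decomposition of elements of a noncommutative $L^p$-space into positive parts.

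First, I would recall that every element $x \in \Lp$ admits a decomposition
\[
x \;=\; \operatorname{Re}(x) + i\,\operatorname{Im}(x) \;=\; (x_1 - x_2) + i(x_3 - x_4),
\]
where $x_1, x_2, x_3, x_4 \in \Lp$ are positive; this follows by writing the self-adjoint parts $\operatorname{Re}(x) = \tfrac{x+x^*}{2}$ and $\operatorname{Im}(x) = \tfrac{x-x^*}{2i}$ (which lie in $\Lp$ because $\fee$ is a trace, hence the involution is an isometry on $\Lp$), and then applying the Jordan decomposition $y = y^+ - y^-$ to each self-adjoint part via the Borel functional calculus in $\LL$ together with the density of $\M_\fee$.

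Next, given $a \in \hp$, decompose $a = a_1 - a_2 + i(a_3 - a_4)$ with $a_j \in \Lp^+$. Applying the projection $\Ep$ and using $\Ep(a) = a$ together with its linearity, we obtain
\[
a \;=\; \Ep(a_1) - \Ep(a_2) + i\bigl(\Ep(a_3) - \Ep(a_4)\bigr).
\]
Since $\Ep$ was already observed in the paragraph preceding the statement to be positive (by taking weak$^*$-limits of Cesàro means of the Markov convolution action on $\Lp \cap \LL$ and passing to limits), each $\Ep(a_j)$ is a positive element of $\hp$. This realises $a$ as a complex linear combination of positive elements of $\hp$, which is exactly what the proposition asserts.

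There is essentially no obstacle here beyond correctly invoking the positivity of $\Ep$ in the appropriate $L^p$-sense, and for $p=\infty$ noting that the same argument works with the ordinary positive cone of $\LL$. The only point worth a line of comment is that the decomposition into positive parts is genuinely internal to $\Lp$ (not just to $\LL$), which is why one needs $\fee$ to be a trace so that taking the absolute value and the positive/negative parts of a self-adjoint element stay within the $L^p$-norm.
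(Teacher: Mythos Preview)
Your proof is correct and follows essentially the same strategy as the paper: decompose an element of $\Lp$ into positive pieces and then push this decomposition into $\hp$ by applying the positive projection $\Ep$. The only cosmetic difference is that the paper first checks separately that $\hp$ is self-adjoint (via approximation from $\Lp\cap\LL$) and then applies the Jordan decomposition to a self-adjoint element, whereas you go directly to the four-term decomposition; the key step---positivity of $\Ep$---is identical.
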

\begin{proof}
By considering the polar decomposition, we observe that $\Lp\cap\LL$ is self-adjoint.
Let $x\in\hp$, and $\Lp\cap\LL\ni x_n\rightarrow x$ in $\Lp$. Using the continuity of the adjoint on $\Lp$, we obtain
\[
\mu\star x^* = \lim_n \mu\star x_n^* =  \lim_n (\mu\star x_n)^* = (\lim_n \mu\star x_n)^* = x^*,
\]
where the limits are taken in $\Lp$. Therefore, $\hp$ is self adjoint, and so is generated by its self adjoint elements.
Now, let $x$ be a self-adjoint element in $\Lp$, and let $x = x_{+} - x_{-}$ where both $x_{+}$ and $x_{-}$
are in $\Lp^+$. Then we have
\[
x = \Ep(x) = \Ep(x_{+}) - \Ep(x_{-}),
\]
which yields the result by positivity of the map $\Ep$.
\end{proof}

\subsection*{Main theorem: case $1<p<\infty$}

{\ }

\noindent
A state $\mu\in \PG$ is called  \emph{non-degenerate} on $\CZ$
if for every non-zero element $x\in \CZ^+$,
there exists $n\in\mathbb{N}$ such that
$\langle x \,,\, \mu^n\rangle \neq 0$.

\begin{theorem}\label{j4}
Let $\G$ be a non-compact, locally compact quantum group with a tracial (left) Haar weight $\fee$,
and let $\mu\in\PG$ be non-degenerate. Then for all $1 < p < \infty$ we have
\[
\hp = \{0\}\,.
\]
\end{theorem}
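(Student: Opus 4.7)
The plan is: reduce to positive elements of $\hp$ via Proposition \ref{j1}; handle $1<p\leq 2$ by an operator-Jensen argument that converts the $\hp$-problem into triviality of positive elements of $\ho$; and recover $2<p<\infty$ using the antipode correspondence from Section~2 together with the duality of Theorem \ref{j2}.

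By Proposition \ref{j1} it suffices to show every positive $x\in\hp$ is zero. Assume $1<p\leq 2$. Since $t\mapsto t^p$ is operator convex on $[0,\infty)$ in this range, Choi's operator Jensen inequality applied to the unital completely positive map $y\mapsto \mu\star y$ on $\LL$ yields $(\mu\star y)^p\leq \mu\star y^p$ for every $y\in \LL^+$. Applying this to the bounded spectral truncations $x_n=xE_{[0,n]}(x)\in \LL^+$ of the (possibly unbounded) positive affiliated operator $x$, and passing to the limit as $n\to\infty$ using normality of $\mu\star\cdot$ together with its continuous extension to $\ho$ from Section~2, one obtains $x^p=(\mu\star x)^p\leq \mu\star x^p$ in $\ho$. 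Left-invariance of $\fee$ gives $\fee(\mu\star x^p)=\fee(x^p)<\infty$, so the positive $\ho$-element $\mu\star x^p-x^p$ has zero trace, and faithfulness of $\fee$ forces $\mu\star x^p=x^p$. Thus $y:=x^p$ is a positive element of $\ho$.

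The crux is to show any such $y$ vanishes. Via (\ref{je1}) and the identity $\mu\star(\fee\cdot a)=\fee\cdot((\mu R)\star a)$ established in Section~2, the associated functional $\omega:=\fee\cdot y\in M_*^+$ is a positive $\mu R$-harmonic normal functional on $\LL$, with $\mu R\in\PG$ still non-degenerate (since $(\mu R)^n=\mu^n R$ and $R$ is a $*$-anti-automorphism of $\CZ$). One route is to argue that the equality $\mu\star x^p=x^p$ is exactly the equality case of operator Jensen and therefore places $x^p$ in the multiplicative domain of $\mu\star\cdot$, making each bounded spectral projection $E_{[\alpha,\infty)}(x^p)$ ($\alpha>0$) a $\mu$-harmonic projection of finite $\fee$-measure (at most $\fee(x^p)/\alpha$); the non-compactness of $\G$ combined with non-degeneracy of $\mu$, together with the $\mathcal{C}_0$-harmonic triviality from \cite{KNR} suitably adapted to exclude nonzero harmonic projections of finite $\fee$-measure, then forces each such projection to vanish, whence $y=0=x$. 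This final step is the main obstacle: it is essentially a quantum analogue of the classical fact that a non-degenerate random walk on a non-compact group admits no nonzero positive $L^1$-harmonic function, and it is the point at which the hypotheses of non-compactness and non-degeneracy are genuinely used.

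For $2<p<\infty$, set $q=p/(p-1)\in(1,2)$. Applying the preceding case to the non-degenerate state $\mu R$ gives $\mathcal{H}_{\mu R}^q(\G)=\{0\}$, and the proposition preceding Theorem \ref{j2} then yields $\tilde{\mathcal{H}}_\mu^q(\G)=R(\mathcal{H}_{\mu R}^q(\G))=\{0\}$, i.e.\ $\hq=\{0\}$. The duality $\hp\cong\hq^*$ from Theorem \ref{j2} finally gives $\hp=\{0\}$.
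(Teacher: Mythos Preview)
Your duality argument for $2<p<\infty$ is essentially the paper's, so that half is fine. The gap is in the range $1<p\le 2$, and it lies exactly where you yourself flag it.

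The operator-Jensen reduction is a reasonable idea, and with some care the limit passage can be justified, so one does arrive at a positive element $y=x^p\in\ho$ with $\mu\star y=y$. But your proposed way of killing $y$ does not work. Two problems: first, equality in the Davis--Choi--Jensen inequality for $t\mapsto t^p$ with $1<p<2$ does \emph{not} place $x$ (or $x^p$) in the multiplicative domain of the Markov operator; Choi's characterisation of the multiplicative domain is the equality case of Kadison--Schwarz, i.e.\ the case $p=2$ only. Second, even granting that the spectral projections $E_{[\alpha,\infty)}(x^p)$ were $\mu$-harmonic with finite $\fee$-measure, there is no reason for such a projection to lie in $\CZ$; finite trace does not imply membership in the reduced $C^*$-algebra, so the $\mathcal{C}_0$-triviality result from \cite{KNR} cannot be invoked. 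Thus the step ``$y\in\ho^+\Rightarrow y=0$'' is not established.

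The paper avoids this altogether. For $1<p\le 2$ it does \emph{not} reduce to $\ho$. Instead, given $0\le x\in\hp$ with $\|x\|_p=1$, it picks a norming $y\in\Lq$ and defines $\Omega_{x,y}\in\LL$ by $\langle f,\Omega_{x,y}\rangle=\langle f\star x,y\rangle$ for $f\in M_*$. An approximation by $x_n\in\nf$, $y_n\in\N_\psi$ identifies $\Omega_{x_n,y_n}=\hat\lambda(\omega_{\Lambda_\fee(x_n),\Lambda_\fee(R(y_n))})\in\CZ$, and H\"older-type estimates give $\Omega_{x_n,y_n}\to\Omega_{x,y}$ in norm, so $\Omega_{x,y}\in\CZ$. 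Setting $\tilde\mu=\sum_n 2^{-n}\mu^n$, non-degeneracy makes $\tilde\mu$ faithful, and harmonicity of $x$ yields $\langle\tilde\mu,\Omega_{x,y}\rangle=\langle x,y\rangle=1=\|\Omega_{x,y}\|$; faithfulness then forces $\Omega_{x,y}=1\in\CZ$, contradicting non-compactness. This is where both hypotheses are actually used, and it is the missing idea in your argument.

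As a side remark: your Jensen route \emph{would} close up if you first proved the paper's later Theorem~\ref{j3} (there are no nonzero $\mu$-harmonic elements in $\MG$, hence in $M_*$, for non-degenerate $\mu$); that, via the identification $\ho\cong M_*$ and the identity $(\mu R)\star(\fee\cdot a)=\fee\cdot(\mu\star a)$, kills $y$ immediately. But as written, with the multiplicative-domain/spectral-projection detour, the proof is incomplete.
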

\begin{proof}
First let $1<p\leq 2$, and suppose $0\leq x\in\hp$ with $\|x\|_p = 1$. Define
$$\tilde\mu := \sum_{i=n}^\infty \frac{\mu^n}{2^n}.$$
Since $\mu$ is non-degenerate $\tilde\mu$ is faithful, and $\tilde\mu\star x = x$.
Now, let $q\geq 2$ be such that ${\frac1 p} + {\frac1 q} = 1$. Using the
duality between $\Lp$ and $\Lq$, we assign to each pair $a\in\Lp$ and $b\in\Lq$, an element $\Omega_{a,b}\in\LL$ defined by
\[
\langle f\,,\,\Omega_{a,b}\rangle = \langle f\star a \,,\, b\rangle \ \ \ \ \ \ (f\in M_*).
\]
We clearly have $\|\Omega_{a,b}\| \leq \|a\|_p \|b\|_q$.
Now, choose $y\in\Lq$, $\|y\|_q = 1$, and such that $\langle x\, ,\, y\rangle =1$.
We claim that $\Omega_{x,y}\in\CZ$ (in fact $\Omega_{a,b}\in\CZ$ for all $a\in\Lp$ and $b\in\Lq$). To see this,
assume that
\[
x = \int_0^\infty \lambda\, de_\lambda
\]
is the spectral decomposition of $x$, and let
\[
x_n = \int_{\frac1n}^n \lambda\, de_\lambda.
\]
Then $x_n\in\Lp\cap\LL\subseteq\nf$, $\|x_n\|_p \leq \|x\|_p$, and $\|x-x_n\|_p \to 0$.
Also let $y_n\in\N_\psi$ be such that
\[
\|y_n - y\|_q \rightarrow 0.
\]

Denote by $\om_{\eta,\zeta}$ the vector functional associated with $\eta\,,\zeta\,\in H_\fee$.
Then, for $f\in M_*$ we have
\begin{eqnarray*}
\langle f \,,\, \Omega_{x_n,y_n}\rangle &=& \langle f\star x_n \,,\, y_n\rangle = 
\langle \lambda(f)\Lambda_\fee(x_n)\, ,\, \Lambda_\fee(R(y_n))\rangle \\
&=& \langle \omega_{\Lambda_\fee(x_n)\,,\,\Lambda_\fee(R(y_n))} , \lambda(f)\rangle
\langle f\, ,\, \hat\lambda(\omega_{\Lambda_\fee(x_n),\Lambda_\fee(R(y_n))})\rangle,
\end{eqnarray*}
which implies that $\Omega_{x_n,y_n} = \hat\lambda(\omega_{\Lambda_\fee(x_n),\Lambda_\fee(R(y_n))})\in\CZ$.
Moreover, it follows that
\begin{eqnarray*}
\|\Omega_{x,y} - \Omega_{x_n,y_n}\|_\infty &\leq& \|\Omega_{x - x_n , y}\|_\infty + \|\Omega_{x_n , y - y_n}\|_\infty \\
&\leq& \|x - x_n\|_p\,\|y\|_q + \|x_n\|_p \, \|y - y_n\|_q\,
\rightarrow\, 0.
\end{eqnarray*}
This shows that $\Omega_{x , y}\in\CZ$, as claimed.
But then we have $\|\Omega_{x,y}\| \leq \|x\|_p \|y\|_q = 1$, and
\[
\langle \tilde\mu\,,\,\Omega_{x,y}\rangle = \langle \tilde\mu\star x\, ,\, y\rangle = \langle x\,,\,y\rangle = 1.
\]
Since $\tilde\mu$ is faithful, it follows that $\Omega_{x,y} =1$,
and therefore $1\in\CZ$, which contradicts our assumption of $\G$ being non-compact, so $x=0$.
This shows, by Proposition \ref{j1}, that $\hp = \{0\}$ for all $1<p\leq2$.
Now, a similar argument yields $\hq = 0$ for all $1< q \leq 2$, which implies
by Theorem \ref{j2} that $\hp = \hq^* = \{0\}$ for all $2\leq p<\infty$.
\end{proof}

\subsection*{Main theorem: case $p = 1$}

{\ }

\noindent
Since $\LO$ is not a dual Banach space, our proof for $1< p<\infty$ does not work in this case, and so
we have to treat this case separately. We do this by first proving a similar result for $M_*$
and then using the identification of the latter with $\LO$.
Note that for the following theorem we don't assume that the Haar weight is a trace.
\begin{theorem}\label{j3}
Let $\G$ be a non-compact, locally compact quantum group,
and let $\mu\in\PG$ be non-degenerate. If $\om\in\MG$ is such that $\mu\star\om = \om$, then $\om = 0$.
\end{theorem}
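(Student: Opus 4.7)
The plan is to adapt the strategy of Theorem~\ref{j4}: reduce to $\omega\ge 0$, then construct an element of $\CZ^+$ on which the faithful state $\tilde\mu:=\sum_{n\ge 1}\mu^n/2^n$ attains its norm, which together with the non-unitality of $\CZ$ yields a contradiction.

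For the reduction, I would first decompose $\omega=\omega_1+i\omega_2$ with $\omega_j$ self-adjoint (via $\omega^*(f):=\overline{\omega(f^*)}$); since every element of $\PG$ is self-adjoint, the hypothesis forces each $\omega_j$ to be $\mu$-invariant. Then for self-adjoint $\omega$ with Jordan decomposition $\omega=\omega_+-\omega_-$, the identity $\|\mu\star\omega_\pm\|=(\mu\star\omega_\pm)(1)=\omega_\pm(1)=\|\omega_\pm\|$ (which uses $\Gamma(1)=1\otimes 1$ and $\mu(1)=1$) combined with
\[
\|\omega_+\|+\|\omega_-\|=\|\omega\|=\|\mu\star\omega_+-\mu\star\omega_-\|\le\|\mu\star\omega_+\|+\|\mu\star\omega_-\|
\]
forces $\mu\star\omega_+\perp\mu\star\omega_-$; uniqueness of the Jordan decomposition then yields $\mu\star\omega_\pm=\omega_\pm$. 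So I may assume $\omega\ge 0$ with $\|\omega\|=1$.

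For the construction, note that $\tilde\mu\star\omega=\omega$ and, for each $f\in\CZ^+$,
\[
\Omega_f:=(\id\otimes\omega)\Gamma(f)\in\CZ
\]
(this lies in $\CZ$ because $\CZ$ is a right $\MG$-module under $\star$). Then $\Omega_f\ge 0$, $\|\Omega_f\|\le\|f\|$, and
\[
\tilde\mu(\Omega_f)=(\tilde\mu\star\omega)(f)=\omega(f).
\]
If I could choose $f\in\CZ^+$ with $\|f\|\le 1$ and $\omega(f)=1$, I would obtain $\tilde\mu(\Omega_f)=1=\|\Omega_f\|$, and the final argument of Theorem~\ref{j4} would apply: $\tilde\mu$ is faithful on $\CZ$ and extends (by a standard Cauchy--Schwarz and approximate-identity argument) to a faithful state on $M(\CZ)$, so $\tilde\mu(\|\Omega_f\|\cdot 1-\Omega_f)=0$ with $\|\Omega_f\|\cdot 1-\Omega_f\in M(\CZ)^+$ forces $\Omega_f=1$, which is impossible in $\CZ$ for non-compact $\G$.

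The hard part is obtaining exact norm-attainment: as a state on the non-unital $C^*$-algebra $\CZ$, $\omega$ does not attain its norm, so no such $f$ exists. My approach would be to take a norming sequence $\{f_n\}\subset\CZ^+$ with $\|f_n\|\le 1$ and $\omega(f_n)\to 1$, pass to a weak$^*$-cluster point $\Omega_\infty\in\CZ^{**}$ of the bounded sequence $\{\Omega_{f_n}\}$, and carry out the faithfulness argument in the bidual via the GNS representation of $\tilde\mu$ (whose cyclic vector is separating by faithfulness). Translating the resulting identity $\Omega_\infty=1$ in $\CZ^{**}$ back to a genuine contradiction with $\{\Omega_{f_n}\}\subset\CZ$ and $1\notin\CZ$ is the delicate step I expect to require the most care.
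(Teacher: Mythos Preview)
Your reduction to positive $\omega$ is correct, but the endgame has a genuine gap that you yourself flag, and it is not merely ``delicate'': faithfulness of $\tilde\mu$ on $\CZ$ (or even on $M(\CZ)$) does \emph{not} survive passage to the bidual. The normal extension $\tilde\mu^{**}$ vanishes on the singular part of $\CZ^{**}$, so from $\tilde\mu^{**}(\Omega_\infty)=1$ and $0\le\Omega_\infty\le 1$ you cannot conclude $\Omega_\infty=1$. Phrased in GNS terms, the cyclic vector for $\tilde\mu$ is separating for $\pi_{\tilde\mu}(\CZ)''$, but the normal extension of $\pi_{\tilde\mu}$ to $\CZ^{**}$ has nontrivial kernel whenever $\CZ$ is infinite-dimensional, and your cluster point $\Omega_\infty$ has no reason to avoid that kernel. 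I do not see a way to repair this route without an essentially new idea.

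The paper sidesteps the norm-attainment obstacle entirely by moving to the Hilbert space via the (extended) left regular representation $\lambda:\MG\to\B(H_\fee)$, $\nu\mapsto(\nu\otimes\iota)(W)$. From $\lambda(\tilde\mu)\lambda(\omega)=\lambda(\tilde\mu\star\omega)=\lambda(\omega)$ one picks any $\xi$ with $\|\lambda(\omega)\xi\|=1$ (using injectivity of $\lambda$) and forms the \emph{vector} functional $\hat\omega=\omega_{\lambda(\omega)\xi}|_{\hat M}\in\hat M_*$, which is automatically norm-attaining. Then $\hat\lambda(\hat\omega)$ lies in $\CZ$ by construction, $\|\hat\lambda(\hat\omega)\|\le 1$, and
\[
\langle\tilde\mu,\hat\lambda(\hat\omega)\rangle=\langle\lambda(\tilde\mu),\hat\omega\rangle=\langle\lambda(\tilde\mu)\lambda(\omega)\xi,\lambda(\omega)\xi\rangle=1,
\]
so faithfulness of $\tilde\mu$ applies \emph{inside} $\CZ$ and forces $\hat\lambda(\hat\omega)=1\in\CZ$, the desired contradiction. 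No positivity reduction on $\omega$ is needed. The moral is that the regular representation manufactures exact norm attainment for free, which is precisely what your $\CZ$-level argument is missing.
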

\begin{proof}
Assume that $\mu\star\om = \om$, and let $\tilde\mu$ be as in the proof of Theorem \ref{j4}.
So, $\tilde\mu$ is faithful, and $\tilde\mu\star\om = \om$. Therefore we have
\[
\lambda(\tilde\mu)\lambda(\om)\xi = \lambda(\tilde\mu\star\om)\xi = \lambda(\om)\xi
\]
for all $\xi\in H_\fee$. Now if $\om\neq 0$, there exist $\xi\in H_\fee$ such that $\|\lambda(\om)\xi\| = 1$.
Denote by $\hat\om$ the restriction of $\omega_{\lambda(\om)\xi}$ to $\hat M$. Then $\|\hat\omega\|  = 1$, and
\[
\langle \tilde\mu\,,\,\hat\lambda(\hat\om)\rangle = \langle \lambda(\tilde\mu)\,,\, \hat\om\rangle
%= \langle \lambda(\tilde\mu), \omega_{\lambda(\om)\xi}\rangle 
= \langle \lambda(\tilde\mu)\lambda(\om)\xi\, ,\, \lambda(\om)\xi\rangle
= \langle \lambda(\om)\xi\, , \,\lambda(\om)\xi\rangle
= 1.
\]
Since $\|\hat\lambda(\hat\om)\| \leq 1$, and $\tilde\mu$ is faithful, it follows that $\hat\lambda(\hat\om) = 1$.
But this implies that $1\in\CZ$, which contradicts our assumption of $\G$ being non-compact.
Hence, $\om = 0$.
\end{proof}

\begin{theorem}\label{j13}
Let $\G$ be a non-compact, locally compact quantum group with a tracial (left) Haar weight $\fee$,
and let $\mu\in\PG$ be non-degenerate. Then
\[
\ho = \{0\}\,.
\]
\end{theorem}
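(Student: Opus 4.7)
The plan is to reduce the $p=1$ case to Theorem \ref{j3} via the isometric isomorphism $\LO\cong M_*$ of (\ref{je1}). If $x\in\ho$, set $\om := \fee\cdot x\in M_*\subseteq\MG$. The identity
\[
(\mu R)\star(\fee\cdot a) = \fee\cdot(\mu\star a)
\]
was established in the preceding subsection for $a\in\M_\fee$, and the convolution action of $\mu$ on $\LO$ was defined precisely so that this identity extends by norm-continuity to all of $\LO$ (both sides are compositions of bounded operators on $L^1$ and agree on a dense subspace). Consequently the $\mu$-harmonicity $\mu\star x = x$ in $\LO$ translates directly into $(\mu R)\star\om = \om$ in $\MG$.

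Next I would verify that the state $\mu R\in\PG$ is itself non-degenerate. Using the relation $\Gamma\circ R = \chi\circ(R\otimes R)\circ\Gamma$ together with the trivial identity $(\mu\otimes\mu)\circ\chi = \mu\otimes\mu$, a short computation gives $(\mu R)\star(\mu R) = \mu^2 R$, and hence by induction $(\mu R)^n = \mu^n R$ for every $n\geq 1$. Since $R$ is a $*$-anti-automorphism of $M$ that restricts to a bijection of $\CZ^+\setminus\{0\}$ onto itself, for any non-zero $y\in\CZ^+$ the image $R(y)$ is again non-zero and positive; non-degeneracy of $\mu$ then supplies some $n$ with $\mu^n(R(y)) = (\mu R)^n(y)\neq 0$. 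Hence $\mu R$ is non-degenerate.

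With these two ingredients in hand, Theorem \ref{j3} applied to the non-degenerate state $\mu R$ and the element $\om\in\MG$ forces $\om = 0$, and by injectivity of (\ref{je1}) we conclude $x=0$, proving $\ho = \{0\}$. The only substantive step is the verification that $\mu R$ is non-degenerate; the transfer of the harmonic equation across $\LO\cong M_*$ is essentially built into the construction of the $L^1$-convolution. I do not anticipate any serious obstacle beyond carefully marshalling these observations.
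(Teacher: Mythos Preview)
Your proposal is correct and follows essentially the same route as the paper: transfer the harmonic equation through the isometry $\LO\cong M_*$ of (\ref{je1}) to obtain $(\mu R)\star(\fee\cdot x)=\fee\cdot x$, then invoke Theorem~\ref{j3} with the non-degenerate state $\mu R$. The paper simply asserts that $\mu R$ is non-degenerate, whereas you supply the verification via $(\mu R)^n=\mu^n R$ and the fact that $R$ preserves $\CZ^+\setminus\{0\}$; this extra detail is sound and fills a small gap.
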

\begin{proof}
Let $x\in\ho$. We have $\mu R\in\PG$ is non-degenerate, and from equations $(2.1)$ and $(2.2)$ we get
\[
\mu R\star (\fee\cdot x) = \fee\cdot(\mu \star x) = \fee\cdot x.
\]
Hence, $\fee\cdot x = 0$ by Theorem \ref{j3}, and therefore $x=0$.
\end{proof}

\begin{remark}
The statements of Theorems \ref{j4} and \ref{j13} are not true in general for the case $p = \infty$.
Any non-degenerate probability measure on a non-amenable discrete group is a counter-example \cite{1}.
\end{remark}

\subsection*{Compact case.}

{\ }

\noindent
We conclude by proving the triviality of $\mu$-harmonic operators in the compact quantum group setting.
\begin{theorem}
Let $\G$ be a compact quantum group with tracial Haar state, and $\mu\in\PG$ be non-degenerate.
Then $\hp = \C1$ for all $1 \leq p \leq \infty$.
\end{theorem}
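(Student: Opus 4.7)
The inclusion $\C 1 \subseteq \hp$ is immediate: $\Gamma(1) = 1\otimes 1$ gives $\mu\star 1 = \mu(1)\cdot 1 = 1$, and $1 \in \Lp$ since $\fee(1) = 1 < \infty$. For the reverse, my plan is to show that the Cesaro projection $\Ep:\Lp\to\hp$ factors through $\C 1$, which combined with $\Ep|_{\hp} = \id$ gives $\hp = \C 1$. By boundedness of $\Ep$ and norm-density of $\LL$ in $\Lp$ in the finite-trace (compact) setting, it suffices to show $\Ep(z) = \fee(z)\cdot 1$ for every $z \in \LL$; the $p=\infty$ case then follows from $\LL\subseteq\Lp$, because any $a\in\mathcal{H}_\mu^\infty$ already lies in some $\hp$ with $p<\infty$, hence in $\C 1$.

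To reduce $\Ep(z) = \fee(z)\cdot 1$ to a weak-$\ast$ statement on $\CZ^*$, I would use the element $\Omega_{z,b}\in\CZ$ built in the proof of Theorem \ref{j4} (for $z\in\LL$, $b\in\Lq$). Setting $\nu_n := \frac{1}{n}\sum_{k=1}^n\mu^k \in \MG$, one has $\langle \nu_n \star z, b\rangle = \nu_n(\Omega_{z,b})$, and in the Kac case $\fee = \psi$ is both left and right invariant so $\fee\star z = \fee(z)\cdot 1$, giving $\fee(\Omega_{z,b}) = \langle \fee\star z, b\rangle = \fee(z)\,\psi(b) = \langle \fee(z)\cdot 1,b\rangle$. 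Hence $\Ep(z) = \fee(z)\cdot 1$ is equivalent to the weak-$\ast$ convergence $\nu_n \to \fee$ in $\CZ^*$. The $p = 1$ case is then transferred through the isomorphism $\LO\cong M_*$, $x\mapsto\fee\cdot x$ of (\ref{je1}), under which a $\mu$-harmonic element of $\LO$ corresponds to a $\mu R$-invariant element of $\MG$ sitting in the image of $M_*$.

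The crucial step is therefore the Cesaro convergence $\nu_n\to\fee$. Since the state space of the unital $C^*$-algebra $\CZ$ is weak-$\ast$ compact, any cluster point $\xi$ of $\{\nu_n\}$ is a state with $\mu\star\xi = \xi = \xi\star\mu$ by the standard Cesaro trick; in particular $\tilde\mu\star\xi = \xi$ for the faithful state $\tilde\mu = \sum_{n\geq 1} 2^{-n}\mu^n$. Replaying the computation of Theorem \ref{j3} with $\xi$ in place of $\om$ produces $\hat\xi\in\hat M_*$ with $\hat\lambda(\hat\xi) = 1$; in the non-compact case this contradicted $1\notin\CZ$, but here we must instead promote it to the identification $\xi = \fee$. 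My plan is to do so via the Peter--Weyl decomposition of $\CZ$: the convolution action of $\mu$ on each non-trivial irreducible corepresentation block is a contraction $\mu(U^\alpha)\in M_{n_\alpha}(\C)$, and one aims to rule out $1$-eigenvectors of $\mu(U^\alpha)$ so that the matrix mean-ergodic theorem yields $\nu_n\to\fee$ pointwise on the dense Hopf subalgebra $\mathcal A\subseteq\CZ$, hence weak-$\ast$ on $\CZ$ by uniform boundedness.

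The main obstacle is precisely this last step, i.e., ruling out $1$-eigenvectors from the faithfulness of $\tilde\mu$, which is strictly stronger than $\hat\lambda(\hat\xi) = 1$ and amounts to the triviality of the $L^\infty$-Poisson boundary on a compact quantum group. The cleanest route is via the Kadison--Schwarz multiplicative-domain theorem applied to the $\fee$-preserving UCP map $T_\mu$ on $\LL$: $\mathcal{H}_\mu^\infty$ is thereby a $\Gamma$-right-coideal von Neumann subalgebra on which $T_\mu = \id$, which together with non-degeneracy of $\mu$ forces it to reduce to $\C 1$.
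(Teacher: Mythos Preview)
Your plan takes a genuinely different route from the paper and contains a real gap. The paper disposes of $p=\infty$ by citation to \cite{KNR} and then gives a short, self-contained argument for $1\le p<\infty$: assuming $x\in\hp\setminus\C1$ with $\|x\|_p=1$, it picks $y\in\Lq$ norming $x$ with $\langle 1,y\rangle=0$, reuses the $\Omega_{x,y}$ construction of Theorem~\ref{j4} to get $\Omega_{x,y}=1$ (via the faithful $\tilde\mu$), and then derives the contradiction $1=\langle\fee,\Omega_{x,y}\rangle=\langle\fee\star x,y\rangle=\fee(x)\langle 1,y\rangle=0$ by left invariance of $\fee$. No appeal to $\mathcal H_\mu^\infty(\G)=\C1$ is needed for the finite-$p$ range.

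By contrast, your entire scheme ultimately rests on establishing $\mathcal H_\mu^\infty(\G)=\C1$: you reduce $\Ep(z)=\fee(z)1$ to the Ces\`aro convergence $\nu_n\to\fee$, then to the absence of $1$-eigenvectors of $\mu(U^\alpha)$ on nontrivial blocks, which you correctly identify as equivalent to triviality of the $L^\infty$ Poisson boundary. This makes your first-paragraph reduction ``$p=\infty$ follows from $p<\infty$ via $\LL\subseteq\Lp$'' circular: you need the $p=\infty$ case to run the $p<\infty$ argument. The Kadison--Schwarz route you sketch does give that $\mathcal H_\mu^\infty(\G)$ is a von Neumann subalgebra (using $\fee$-preservation of $T_\mu$ in the tracial case) and a left coideal, but the final clause ``non-degeneracy of $\mu$ forces it to reduce to $\C1$'' is precisely the nontrivial content of the $p=\infty$ result from \cite{KNR}; you have not supplied that step. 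If you are willing to quote $\mathcal H_\mu^\infty(\G)=\C1$ as the paper does, then your deduction of the finite-$p$ cases from it is correct but considerably longer than the paper's direct $\Omega_{x,y}$ contradiction.
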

\begin{proof}
The case $p=\infty$ was proved in the general case in \cite{KNR}. Let $1\leq p<\infty$, and assume that $x\in\hp$, $x\notin\C1$, and $\|x\|_p = 1$. 
Then there exists $y\in\Lq$ with $\|y\|_q = 1$ such that $\langle x \,,\, y\rangle = 1$ (we let $q=\infty$ for $p=1$) and
$\langle 1\, ,\, y \rangle = 0$.
Then from the proof of Theorem \ref{j4} (which we can also apply to the case of $p=1$ and $q=\infty$,
since $\LL\subseteq\LT$ for a compact quantum group) we have $\Omega_{x,y} =1$, and
\begin{equation}\label{j8}
\langle \fee\star x \,,\, y\rangle = \langle \fee \,,\, 1\rangle = 1,
\end{equation}
where $\fee$ is the Haar state on $\G$.
Now, let $x_n\in \LL$ such that $\|x_n - x\|_p \rightarrow 0$. Then
\[
\langle \fee\star x\, ,\, y\rangle = \lim_n \langle \fee\star x_n \,,\,y\rangle = \lim_n \langle \fee\, ,\, x_n \rangle  \langle 1\, ,\,y\rangle = 0.
\]
But this contradicts (\ref{j8}), and therefore $x=0$. Hence, $\hp = \C1$. 
\end{proof}

%%%%%%%%%%%%%%%%%%%%%%%%%%%%%%%%%%%%%%%%%
%%%%%%%%%%%%%%%%%%%%%%%%%%%%%%%%%%%%%%%%%
\begin{remark}
All our results in this paper can be proved, by slight modifications of the arguments, for a state $\mu$ on
the universal $C^*$-algebra $\CU$.
\end{remark}

\bibliographystyle{plain}

\end{document}